\numberwithin{equation}{section}
\newtheorem{theorem}{Theorem}[section]
\newtheorem*{theorem*}{Theorem}
\newtheorem{proposition}[theorem]{Proposition}
\newtheorem*{proposition*}{Proposition}
\theoremstyle{definition}
\newtheorem{definition}[theorem]{Definition}
\newtheorem{observation}[theorem]{Observation}
\numberwithin{equation}{section}
\begin{document}

\title{Enlargeable Foliations and the Monodromy Groupoid: Infinite Covers}

\author{Guangxiang Su and Zelin Yi}

\date{}

\maketitle

\abstract{In this paper, we prove that the foliated Rosenberg index of a possibly noncompactly enlargeable, spin foliation is nonzero. It generalizes our previous result. The difficulty brought by the noncompactness is reflected in the infinite dimensionality of some vector bundles which, fortunately, can be reduced to finite dimensional vector bundles by the idea of relative index theorem and $KK$-equivalence between the $C^\ast$-algebra of compact operators and $\mathbb{C}$.}

\section{Introduction}
Enlargeability \cite{GromovLawson80} is an important notion in studying which manifold admit positive scalar curvature metric. There are several notions of enlargeability for foliation in the literatures. 
Recall the following notion of enlargeability which is, in some sense, in between that of \cite[Definition~0.1]{Zhang20} and \cite[Definition~1.5]{BenameurHeitsch19}.
\begin{definition}\label{def-enlargeable-foliation}
	A foliation $(M,F)$ is compactly enlargeable if there is $C>0$ such that for any $\varepsilon> 0 $, there is a compact covering $\widetilde{M}_\varepsilon$ of $M$ and a smooth map
	\[
	f_\varepsilon: \widetilde{M}_\varepsilon \to S^n
	\]
	with
	\begin{itemize}
		\item 
		$|f_{\varepsilon,\ast} X| \leq \varepsilon |X|$ for all $X\in C^\infty(\widetilde{M}_\varepsilon,\widetilde{F}_\varepsilon)$, where $\widetilde{F}_\varepsilon$ is the lifting of $F$ to $\widetilde{M}_\varepsilon$;
		\item
		$|f_{\varepsilon,\ast} X| \leq C\cdot |X|$ for all $X\in C^\infty(\widetilde{M}_\varepsilon, T\widetilde{M}_\varepsilon)$;
		\item
		$f_\varepsilon$ has nonzero degree;
		\item
		$f_\varepsilon$ is constant outside some compact subset $K_\varepsilon\subset \widetilde{M}_\varepsilon$.
	\end{itemize}
A foliation $(M,F)$ is enlargeable if the above condition holds with possibly non-compact coverings $\widetilde{M}_\varepsilon$.
\end{definition}

We define, in \cite{SuYi23}, a foliated version of Rosenberg index $[\alpha]\in K_\ast (C^\ast G_M)$ and prove
\begin{theorem}
	If $(M,F)$ is a compactly enlargeable foliation with $F$ spin, then the foliated Rosenberg index $[\alpha]\in K_\ast(C^\ast G_M)$ is nonzero.
\end{theorem}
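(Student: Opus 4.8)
The plan is to adapt the Gromov--Lawson argument~\cite{GromovLawson80}, in the $C^\ast$-algebraic form due to Hanke--Schick, to the leafwise monodromy setting. Recall that $[\alpha]\in K_\ast(C^\ast G_M)$ is the index of the leafwise spin Dirac operator $D_F$ twisted by the canonical Mishchenko bundle with fibre the tautological $C^\ast G_M$-module. Rather than argue by contradiction, I would construct, for each sufficiently small $\varepsilon$, an additive functional $\phi_\varepsilon\colon K_\ast(C^\ast G_M)\to \C$ and verify that $\phi_\varepsilon([\alpha])\neq 0$, which at once forces $[\alpha]\neq 0$.

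First I would extract the almost-flat input from Definition~\ref{def-enlargeable-foliation}. Pulling back a generator $\beta\in\widetilde{K}^\ast(S^n)$ along $f_\varepsilon$ gives a bundle $E_\varepsilon=f_\varepsilon^\ast\beta$ on the compact cover $\widetilde{M}_\varepsilon$ whose leafwise curvature is $O(\varepsilon)$, by the contraction bound $|f_{\varepsilon,\ast}X|\le\varepsilon|X|$; thus $E_\varepsilon$ is almost flat along the leaves, and the uniform bound by $C$ together with the locally-constant-at-infinity condition keep all estimates uniform. Leafwise parallel transport of $E_\varepsilon$ then produces an almost-representation of the monodromy groupoid whose multiplicative defect is controlled by this $O(\varepsilon)$ curvature; following Hanke--Schick, for $\varepsilon$ small this assembles into the sought homomorphism $\phi_\varepsilon$ on the $K$-theory of the (maximal) groupoid algebra. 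By naturality of the index map under twisting, $\phi_\varepsilon([\alpha])$ is then identified with a numerical foliated index of $D_F\otimes E_\varepsilon$ on $\widetilde{M}_\varepsilon$: the leafwise almost-flatness is exactly what makes $\phi_\varepsilon$ well defined, while the nonzero degree of $f_\varepsilon$ enters through the global Chern character of $E_\varepsilon$ and, via the foliation index theorem, guarantees that this index is nonzero.

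I expect the main obstacle to be the middle step: turning the family of leafwise almost-flat bundles into an honest pairing on $K_\ast(C^\ast G_M)$ that survives the limit $\varepsilon\to 0$. This is the point where, already in the manifold case, one is forced onto the maximal completion, since an almost-multiplicative map need not integrate on the reduced algebra; in the foliated setting one must carry out the analogous asymptotic argument for the groupoid algebra, using the uniform $C$-bound to control the geometry of the leaves. Since the coverings $\widetilde{M}_\varepsilon$ here are compact, the index computation on each of them is classical and the whole difficulty is concentrated in constructing and stabilizing $\phi_\varepsilon$; once that is done, the nonvanishing just established completes the proof.
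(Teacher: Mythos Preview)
Your overall strategy---adapting the Hanke--Schick reformulation of the Gromov--Lawson argument to the monodromy groupoid---matches the paper's, but the central mechanism you describe is not the one that actually works, and the gap sits exactly where you suspect.

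You propose to extract, for each small $\varepsilon$, a separate functional $\phi_\varepsilon\colon K_\ast(C^\ast G_M)\to\C$ from an almost-representation produced by leafwise parallel transport in $E_\varepsilon$. But an almost-flat bundle (equivalently, an almost-representation of the groupoid) does not by itself yield a $K$-theory homomorphism; this is precisely the ``main obstacle'' you flag, and it is not resolved by passing to a maximal completion or by taking $\varepsilon$ small enough. What Hanke--Schick actually do, and what the paper recalls in Section~1.1, is different: one first wraps each $f_\varepsilon^\ast E$ down to a finite-dimensional bundle $E_\varepsilon$ on $M$ itself (not on the cover), and then assembles the \emph{entire sequence} $\{E_{1/i}\}_i$ into a single Hilbert $A$-module bundle $\mathbb{E}\to M$, where $A\subset\prod_i M_{d_{1/i}}(\C)$ is the $C^\ast$-algebra of bounded sequences (Definition~\ref{def-algebra-A-Q}). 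The leafwise curvature of $\mathbb{E}$ takes values in the ideal $A'\subset A$ of null sequences, so the quotient $\mathbb{E}/A'$ is a \emph{genuinely} leafwise flat Hilbert $Q$-module bundle, $Q=A/A'$. Only this honest flatness produces a well-defined map $K_\ast(C^\ast G_M)\to K_0(C^\ast(G_M,Q))$ (via \cite[Corollary~6.10]{SuYi23}), sending $[\alpha]$ to the longitudinal index of $D_{\mathbb{E}/A'}$.

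The nonvanishing is then a diagram chase (diagram~\eqref{eq-diagram-chase}): lift to the $A$-level, project to each factor $K_0(C^\ast G_M)$, and pair with Connes' transversal fundamental class to obtain the sequence $\bigl(\langle\widehat{A}(F)\operatorname{ch}(E_{1/i}-\C^{d_{1/i}}),[M]\rangle\bigr)_i$, which has infinitely many nonzero entries by the choice of $E\to S^n$ and the degree hypothesis; hence the image in $K_0(C^\ast(G_M,Q))$ is nonzero. So there is no family of numerical functionals $\phi_\varepsilon$: the construction is intrinsically asymptotic, the single map one builds lands in $K$-theory with coefficients in the quotient algebra $Q$, and the detection happens there rather than in $\C$.
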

The main result of this paper is the following generalization.

\begin{theorem}\label{thm-main-result}
	If $(M,F)$ is an enlargeable foliation with $F$ spin, then the foliated Rosenberg index $[\alpha]\in K_\ast(C^\ast G_M)$ is nonzero.
\end{theorem}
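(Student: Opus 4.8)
The plan is to argue by contradiction and to reduce the noncompact situation to the already-established compact case by a localization argument in the spirit of Gromov--Lawson, transported to the monodromy groupoid. Suppose $[\alpha]=0$ in $K_\ast(C^\ast G_M)$. For each $\varepsilon>0$ I would fix the covering $\widetilde{M}_\varepsilon$ and the map $f_\varepsilon\colon \widetilde{M}_\varepsilon\to S^n$ supplied by Definition~\ref{def-enlargeable-foliation} (taking $n$ even), and pull back along $f_\varepsilon$ a fixed generator of the reduced $K$-theory of $S^n$, represented by a Hermitian bundle $E\to S^n$ with connection that is trivial with trivial connection near a basepoint. Since $f_\varepsilon$ is locally constant outside the compact set $K_\varepsilon$, the bundle-with-connection $E_\varepsilon=f_\varepsilon^\ast E$ coincides with a fixed trivial one outside $K_\varepsilon$; this is exactly the data to which a relative index theorem applies.

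Next I would form the leafwise spin Dirac operator on $\widetilde{M}_\varepsilon$ twisted by $E_\varepsilon$, together with the Mishchenko--Fomenko type leafwise Dirac operator twisted by the canonical $C^\ast G_M$-bundle whose leafwise index defines $[\alpha]$, and assemble these into a single twisted operator. The first contraction estimate $|f_{\varepsilon,\ast}X|\le \varepsilon|X|$ for $X\in C^\infty(\widetilde{M}_\varepsilon,\widetilde{F}_\varepsilon)$ forces the leafwise curvature of $E_\varepsilon$ to be of order $\varepsilon$ uniformly, while the second estimate bounds it uniformly in all directions; this is the \emph{almost-flatness} that, through the leafwise Lichnerowicz--Bochner formula, lets the topological term survive any geometric vanishing. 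The nonzero degree of $f_\varepsilon$ guarantees that the relative index of the $E_\varepsilon$-twisted operator against its trivial comparison operator equals $\deg(f_\varepsilon)$ times the relevant characteristic number, hence is a fixed nonzero integer.

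The heart of the argument, and the genuinely new input beyond the compact case, is to make sense of these indices when $\widetilde{M}_\varepsilon$ is noncompact, where the $C^\ast G_M$-bundle and the associated geometric data become infinite dimensional. Here I would invoke the relative index theorem to express the pairing of $[\alpha]$ with the class determined by $E_\varepsilon$ as an honest index localized on the compact region $K_\varepsilon$, and then use the $KK$-equivalence $\mathcal{K}\sim_{KK}\C$ between the $C^\ast$-algebra of compact operators and $\C$ to replace the infinite-dimensional fibers by a finite-dimensional model, thereby reducing to a computation to which the compact-case theorem and its underlying index calculation apply verbatim. Combining the two evaluations, the assumption $[\alpha]=0$ forces this localized index to vanish, whereas the degree computation makes it equal to the nonzero integer $\deg(f_\varepsilon)$ once $\varepsilon$ is small enough for the curvature error to be absorbed---a contradiction, which proves $[\alpha]\neq 0$.

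The main obstacle I expect is precisely the passage from the formal, possibly infinite-dimensional, index-theoretic quantities to a well-defined $K$-theoretic pairing on the noncompact covers: one must verify that the relative index is genuinely a homotopy/cobordism invariant localized on $K_\varepsilon$, independent of the noncompact geometry at infinity, and that the $KK$-equivalence $\mathcal{K}\sim_{KK}\C$ can be implemented compatibly with the leafwise structure and the groupoid $C^\ast$-algebra so that the reduction to finite dimensions does not alter the computed index. Establishing this compatibility, together with the uniformity of the curvature bounds needed to absorb the error term independently of $\varepsilon$, is where the technical weight of the proof will lie.
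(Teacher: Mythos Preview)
Your proposal correctly identifies the two technical devices the paper relies on---the relative-index packaging of the virtual bundle $E_\varepsilon-\mathbb{C}^d$ (trivial outside $K_\varepsilon$) and the $KK$-equivalence $\mathcal{K}\sim\mathbb{C}$ used to replace infinite-dimensional fibers by finite-dimensional ones---but the surrounding architecture is different in a way that matters. You frame the argument as a per-$\varepsilon$ Lichnerowicz-type contradiction on the noncompact cover $\widetilde{M}_\varepsilon$: take $\varepsilon$ small, absorb the curvature error, and conclude that a certain ``pairing of $[\alpha]$ with the class determined by $E_\varepsilon$'' both vanishes (because $[\alpha]=0$) and equals $\deg(f_\varepsilon)\neq 0$. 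The gap is in the first evaluation: for a \emph{single} bundle $E_\varepsilon$ that is only \emph{almost} leafwise flat, there is no map out of $K_\ast(C^\ast G_M)$ whose value on $[\alpha]$ is the twisted index. Such a map requires a genuinely leafwise flat coefficient bundle (this is exactly the hypothesis of \cite[Corollary~6.10]{SuYi23}), and no finite smallness of $\varepsilon$ produces one. There is also no positive scalar curvature in the hypotheses to ``absorb the curvature error'' in a Bochner estimate; the goal is $[\alpha]\neq 0$, not a direct PSC obstruction.

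The paper's route avoids this by never doing analysis on $\widetilde{M}_\varepsilon$. It wraps $f_\varepsilon^\ast E$ down to a Hilbert $C_T^\varepsilon\ (\cong\mathcal{K})$-module bundle over the compact base $M$, applies the $KK$-equivalence $\mathcal{K}\sim\mathbb{C}$ to produce a \emph{finite-dimensional} bundle $E^0_\varepsilon\to M$ with the same Chern data, and checks that the resulting sequence $\{E^0_{1/i}\}$ is asymptotically leafwise flat. Only then does the Hanke--Schick mechanism enter: the sequence is assembled into a Hilbert $A$-module bundle over $M$, and passing to the quotient by $A'$ kills the curvature \emph{algebraically}, yielding a genuinely leafwise flat $Q$-bundle. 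That flat bundle furnishes the homomorphism $K_\ast(C^\ast G_M)\to K_0(C^\ast(G_M,Q))$ sending $[\alpha]$ to a twisted index, whose nonvanishing is read off from the diagram~\eqref{eq-diagram-chase} via Connes' transversal fundamental class and the computation~\eqref{eq-chern-character-integral}. The contradiction lives entirely in $K$-theory; there is no Lichnerowicz step. Your sketch becomes correct if you replace the per-$\varepsilon$ contradiction by this $A/A'/Q$ assembly, and move the relative-index and $\mathcal{K}\sim\mathbb{C}$ reductions upstream, so that they manufacture the finite-dimensional $E^0_\varepsilon$ on $M$ \emph{before} the assembly rather than after.
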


\subsection{Previous approach}
Let us first recall the idea of \cite{SuYi23} as follow:  Let $E\to S^n$ be a vector bundle whose top degree Chern class is not zero while all other Chern classes are zero. The pullback $f_\varepsilon^\ast E\to \widetilde{M}_\varepsilon$ can be "wrapped up" (see \cite{HankeSchick06}) to a finite dimensional vector bundle $E_\varepsilon\to M$. As $\varepsilon$ range over $1,1/2,\cdots, 1/n,\cdots$, we get a sequence of leafwise increasingly flat vector bundles $\{E_{1/i}\}_{i\in \mathbb{N}}$ over $M$ whose Chern classes vanish except the top degree one. Denote by $d_{1/i}$ the dimension of $E_{1/i}$.

\begin{definition}\label{def-algebra-A-Q}
	Let $A\subset \prod M_{d_{1/i}}(\mathbb{C})$ be the $C^\ast$-algebra of bounded sequence, $A^\prime \subset A$ be the subalgebra consisting of sequences that converge to zero. In other word, $A^\prime$ is the closure of 
	$
	{\bigoplus \prod M_{d_{1/i}}(\mathbb{C})}\subset A.
	$
	Let $Q=A/A^\prime$.
	
\end{definition}

The sequence of vector bundles $\{E_{1/i}\}$ can be assembled into a Hilbert $A$-module bundle $\mathbb{E}\to M$ and the increasingly-flatness is reflected in the fact that its leafwise curvature take value in $A^\prime$.  As a consequence, the quotient vector bundle $\mathbb{E}/A^\prime$ by the action of $A^\prime$ is genuine leafwise flat Hilbert $Q$-module bundle. According to \cite[Corollary 6.10]{SuYi23}, it determines a homomorphism $K(C^\ast G_M)\to K_0(C^\ast (G_M,Q))$ which send the foliated Rosenberg index to the longitudinal index of the twisted leafwise Dirac $D_{\mathbb{E}/A^\prime}$. It then suffices to show that the index of  $D_{\mathbb{E}/A^\prime-Q\times M}$, which is defined as $\operatorname{Ind}(D_{\mathbb{E}/A^\prime})-\operatorname{Ind}(D_{Q\times M})$, is nonzero. The argument is summarize in the following diagram:
\begin{equation}\label{eq-diagram-chase}
\xymatrix{
	K_0(C^\ast (G_M, A^\prime))\ar[d] \ar[r]& \bigoplus K_0(C^\ast G_M) \ar[d]  \ar^{\mu}[r] &\bigoplus\mathbb{C}\ar[d] \\
	\operatorname{Ind}(D_{\mathbb{E}-A\times M})\in K_0(C^\ast (G_M, A))\ar[r]\ar[d] & \prod K_0(C^\ast G_M) \ar^{\mu}[r] &\prod \mathbb{C},\\
	\operatorname{Ind}(D_{\mathbb{E}/A^\prime-Q\times M})\in K_0(C^\ast (G_M, Q)) &  &
}
\end{equation}
where the first column is exact, the horizontal arrows from the first column to the second column are induced by sending $A$ and $A^\prime$ to their components and the horizontal arrows from the second column to the third column is induced by Connes' transversal fundamental class. The twisted leafwise Dirac operator $D_{\mathbb{E}/A^\prime-Q\times M}$ can be lifted to $D_{\mathbb{E}-A\times M}$. It suffices to show that the longitudinal index of $D_{\mathbb{E}-A\times M}$ does not come from the image of the top-left arrow. Indeed, it can be checked that the longitudinal index of $D_{\mathbb{E}-A\times M}$ is sent, by the horizontal sequence, to
\[
 \left(\langle\widehat{A}(F)\operatorname{ch}(E_{1/i}-\mathbb{C}^{d_i}), [M]\rangle \right)_{i\in \mathbb{N}},
\]
which, according to the assumption on Chern classes, has infinitely many nonzero terms.

\subsection{Difficulty with noncompactness}

The main difficulty with non-compactly enlargeable foliation is that the covering spaces $\widetilde{M}_\varepsilon$ are non-compact so that the bundles $E_\varepsilon\to M$ are infinite dimensional. To get some useful information out of $E_\varepsilon$'s, Hanke and Schick \cite{HankeSchick07} organize these bundles into some Hilbert module bundles in the following novel way:

Let $G$ and $H_\varepsilon$ be the fundamental groups for $M$ and $\widetilde{M}_\varepsilon$ respectively. Then each fiber of the covering $\widetilde{M}_\varepsilon\to M$ can be parameterized by $G/H_\varepsilon$. We fix such a parameterization. The $C^\ast$-algebras $C_T,C_S$ and $C_{S,T}$ are as in \cite{HankeSchick07} except that we shall add a supscript $\varepsilon$ to indicate its dependence on $\varepsilon$.

\begin{definition}
	Let $C_T^\varepsilon\subset \mathcal{B}(\ell^2(G/H_\varepsilon)\otimes \mathbb{C}^d)$ be the $C^\ast$-algebra generated by $G/H_\varepsilon$ families of $M_d(\mathbb{C})$ all but finitely many are zero. Let $C^\varepsilon_S\subset \mathcal{B}(\ell^2(G/H_\varepsilon)\otimes \mathbb{C}^d)$ be the $C^\ast$-algebra generated by permutations of $G/H_\varepsilon$ and let $C_{S,T}^\varepsilon$ be the $C^\ast$-algebra generated by $C^\varepsilon_S$ and $C^\varepsilon_T$. Notice here $C_T^\varepsilon$ is isomorphism to the $C^\ast$-algebra of compact operators and is a two-sided ideal in $C_{S,T}^\varepsilon$.
\end{definition}

Fix $\varepsilon$ and let $\pi_\varepsilon: \widetilde{M}_\varepsilon\to M$ be the covering map, let $\{U_\alpha\}$ be a finite open cover of $M$ such that $f^\ast_\varepsilon E$ is trivial on $\pi_\varepsilon^{-1}(U_\alpha)$ for all $\alpha$. Fix these trivializations
\begin{equation}\label{eq-trivializations-of-pullback}
f^\ast_\varepsilon E|_{\pi^{-1}_\varepsilon(U_\alpha)} \xrightarrow{\varphi_\alpha} \pi^{-1}_\varepsilon(U_\alpha)\times \mathbb{C}^d.
\end{equation}
The transition functions 
\begin{equation}\label{eq-transition-function}
\pi^{-1}_\varepsilon (U_\alpha\cap U_\beta)\times \mathbb{C}^d\xrightarrow{\varphi_{\alpha\beta}} \pi^{-1}_\varepsilon (U_\alpha\cap U_\beta)\times \mathbb{C}^d
\end{equation}
can be viewed as a map from $U_\alpha\cap U_\beta$ to $C^\varepsilon_{S,T}$, which, used as new set of transition functions, build the Hilbert $C_{S,T}^\varepsilon$-module bundle $E_\varepsilon\to M$.

Apart from Hilbert module bundle structure, one more ingredient is need to tackle the noncompactness of $\widetilde{M}_\varepsilon$, that is \textbf{relative index}. Let $R(C^\varepsilon_{S,T})$ be the $C^\ast$-algebra defined by $R(C^\varepsilon_{S,T})=\{(c_1,c_2)\in C^\varepsilon_{S,T}\times C^\varepsilon_{S,T} \mid c_1-c_2\in C_T^\varepsilon\}$. Inspired by Roe's approach to relative index \cite{Roe91}, Hanke and Schick organize the virtual bundle $E_\varepsilon-C^\varepsilon_{S,T}$ into a Hilbert $R(C^\varepsilon_{S,T})$-module bundle. Then the index of the  twisted Dirac operator $D_{E_\varepsilon-C^\varepsilon_{S,T}}$ belongs to $K_0(R(C^\varepsilon_{S,T}))$. 

We shall recycle the letters $A, A^\prime$ and $Q$ from Definition~\ref{def-algebra-A-Q} to make the following new definition.
\begin{definition}\label{def-another-a-q}
	Let $A\subset \prod_i R(C^{1/i}_{S,T})$ be the $C^\ast$-algebra of bounded sequences, $A^\prime\subset A$ be the subalgebra consists of sequences that converge to zero and $Q=A/A^\prime$.
\end{definition}
Following the same pattern, we may assemble the sequence of virtual bundles $\{E_{1/i}-C^{1/i}_{S,T}\}$ into a Hilbert $A$-module virtual bundle $\mathbb{E}-A$. Its quotient $\mathbb{E}/A^\prime-Q$ is a virtual flat Hilbert $Q$-module bundle which determines a homomorphism $\pi_1(M)\to Q$. The advantage of $R(C^\varepsilon_{S,T})$ is that its $K$-theory splits out a $\mathbb{Z}$-component which can be computed by a generalization of Mischenko-Fomenko index theorem \cite{Schick05} plus a trace calculation. An analogous diagram as \eqref{eq-diagram-chase} can be considered:
\begin{equation}\label{eq-diagram-chase-unfoliated}
	\xymatrix{
		K_0(A^\prime)\ar[d] \ar[r]& \bigoplus K_0(R(C_{S,T}^{1/i})) \ar[d]  \ar[r] &\bigoplus\mathbb{Z}\ar[d] \\
		\operatorname{Ind}(D_{\mathbb{E}-A\times M})\in K_0(A)\ar[r]\ar[d] & \prod K_0(R(C_{S,T}^{1/i})) \ar[r] &\prod \mathbb{Z},\\
		\operatorname{Ind}(D_{\mathbb{E}/A^\prime-Q\times M})\in K_0(Q) &  &
	}
\end{equation}
where again the horizontal map from the first column to the second column is given by mapping $A$ and $A^\prime$ to their components. The difference is that the horizontal arrows from the second column to the third column is given by the $\mathbb{Z}$-components in the $K$-theory of $R(C^\varepsilon_{S,T})$. As a consequence the image of the middle horizontal sequence is
\[
\left(\langle \widehat{A}(\widetilde{M}_{1/i})\operatorname{ch}(f^\ast_{1/i} E-\mathbb{C}^d), [\widetilde{M}_{1/i}]\rangle\right)_i,
\]
which has infinitely many nonzero terms.

\subsection{Reduction to compact case}

To circumvent the usage of \cite{Schick05} in the case of foliation, we make the following observation.
\begin{observation}\label{observation}
	As $C^\varepsilon_T\subset C^\varepsilon_{S,T}$ is an ideal, the $C^\ast$-algebra $C^\varepsilon_{S,T}$ can be mapped to the multiplier algebra of $C^\varepsilon_T$
	\begin{equation}\label{eq-CST-to-multiplier-algebra}
	C^\varepsilon_{S,T}\to M(C^\varepsilon_T),
	\end{equation}
	the compositions of the set of transition functions \eqref{eq-transition-function} and the map \eqref{eq-CST-to-multiplier-algebra} can be taken as a new set of transition functions. Together with trivializations $U_\alpha\times C^\varepsilon_T$, it builds  a Hilbert $C^\varepsilon_T$-module bundle $E_\varepsilon$. Notice that here we recycle the notation $E_\varepsilon$. From here on, the notation $E_\varepsilon$ will be reserved for this Hilbert $C_T^\varepsilon$-module bundle.
\end{observation}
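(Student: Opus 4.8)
The plan is to check that the stated clutching data actually defines a locally trivial bundle of Hilbert $C^\varepsilon_T$-modules; the only conceptual ingredient is the canonical passage from an algebra to the multiplier algebra of one of its ideals, and everything else is a verification that the standard gluing construction survives this passage. First I would make the map \eqref{eq-CST-to-multiplier-algebra} explicit. Because $C^\varepsilon_T$ is a closed two-sided ideal in $C^\varepsilon_{S,T}$, each $c\in C^\varepsilon_{S,T}$ determines a multiplier $(L_c,R_c)$ of $C^\varepsilon_T$ by $L_c(t)=ct$ and $R_c(t)=tc$; these maps land back in $C^\varepsilon_T$ exactly because it is an ideal, and they satisfy the compatibility identity $t_1L_c(t_2)=R_c(t_1)t_2$. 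The assignment $\sigma\colon c\mapsto(L_c,R_c)$ is then a $\ast$-homomorphism $C^\varepsilon_{S,T}\to M(C^\varepsilon_T)$, and it is unital since the identity operator lies in $C^\varepsilon_{S,T}$ (the identity permutation belongs to $C^\varepsilon_S$) and is carried to the identity multiplier.

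Next I would transport the transition functions \eqref{eq-transition-function}. After fixing a Hermitian metric on $E\to S^n$ and pulling it back, the maps $\varphi_{\alpha\beta}\colon U_\alpha\cap U_\beta\to C^\varepsilon_{S,T}$ may be taken unitary-valued, and they satisfy the cocycle relations $\varphi_{\alpha\beta}\varphi_{\beta\gamma}=\varphi_{\alpha\gamma}$ on triple overlaps together with $\varphi_{\alpha\alpha}=1$. Composing with $\sigma$ preserves every one of these properties: being a unital $\ast$-homomorphism, $\sigma$ sends unitaries to unitaries of $M(C^\varepsilon_T)$ and respects products, while being norm-contractive it sends continuous (resp. smooth) maps to continuous (resp. smooth) maps in the norm topology of $M(C^\varepsilon_T)$. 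Thus $\{\sigma\circ\varphi_{\alpha\beta}\}$ is again a unitary cocycle, now with values in $M(C^\varepsilon_T)$.

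Finally I would assemble the bundle from the trivial pieces $U_\alpha\times C^\varepsilon_T$, where the fiber $C^\varepsilon_T$ is viewed as a right Hilbert module over itself with inner product $\langle s,t\rangle=s^\ast t$, gluing over $U_\alpha\cap U_\beta$ by letting $\sigma(\varphi_{\alpha\beta})(x)$ act by left multiplication. Any $m\in M(C^\varepsilon_T)$ acts on $C^\varepsilon_T$ as an adjointable operator with adjoint $L_{m^\ast}$, since $\langle ms,t\rangle=s^\ast m^\ast t=\langle s,m^\ast t\rangle$; a unitary multiplier therefore acts as a unitary of the Hilbert module, so each gluing map is an isomorphism of $C^\varepsilon_T$-modules and the cocycle condition makes the identifications consistent on triple overlaps. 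This yields the locally trivial Hilbert $C^\varepsilon_T$-module bundle $E_\varepsilon$ claimed in the statement.

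The one point I would flag is that $\sigma$ is in general not injective --- it is injective precisely when $C^\varepsilon_T$ is an essential ideal --- so passing to $M(C^\varepsilon_T)$ genuinely forgets part of $C^\varepsilon_{S,T}$. For the construction itself this is harmless, because well-definedness of the clutching uses only that $\sigma$ is a unital $\ast$-homomorphism. The real work is thereby postponed rather than avoided: one must afterwards verify that the relative-index data needed downstream is not destroyed by $\sigma$, and this is exactly the payoff of the reduction, since the fiber is now $C^\varepsilon_T\cong\mathbb{K}$, whose $K$-theory is accessible through the $KK$-equivalence between the compact operators and $\mathbb{C}$.
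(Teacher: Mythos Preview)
Your proposal is correct and matches the paper's intent; the paper states this as an observation with no separate proof, and you have simply written out the standard verification that a unital $\ast$-homomorphism carries a unitary cocycle to a unitary cocycle, so the clutching construction goes through. One small remark: your caveat about injectivity of $\sigma$ is unnecessary here, since $C^\varepsilon_T\cong\mathcal{K}(\ell^2(G/H_\varepsilon)\otimes\mathbb{C}^d)$ is an essential ideal in any $C^\ast$-subalgebra of $\mathcal{B}(\ell^2(G/H_\varepsilon)\otimes\mathbb{C}^d)$ containing it, so $\sigma$ is in fact injective in this case.
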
 
Now the difficulty of non-compactness is reflected in the fact that $C^\varepsilon_T\cong \mathcal{K}$ is not unital so that the $KK$-theory element construction \cite[Corollary 6.10]{SuYi23} can not be applied directly. 

Again the missing ingredient is provided by the relative index theorem. The virtual bundle 
\begin{equation}\label{eq-difference-k-bundle}
E_\varepsilon - C^\varepsilon_T
\end{equation}
can be organized into an element in the group $K_0(C(M)\otimes C^\varepsilon_{T})$. Under the light of $KK$-equivalence between $\mathcal{K}$ and $\mathbb{C}$ the difference bundle \eqref{eq-difference-k-bundle} can be reduced to a finite dimensional virtual bundle $E^{0}_\varepsilon - \mathbb{C}^{d_{\varepsilon}}$ with $d_\varepsilon=\operatorname{dim}(E^0_\varepsilon)$. The advantage of $E^{0}_\varepsilon$ over $E_\varepsilon$ is that, apart from being asymptotic flat as $\varepsilon\to 0$, it is finite dimensional vector bundle so that \cite[Corollary 6.10]{SuYi23} can be applied. Exactly the same diagram as \eqref{eq-diagram-chase} can be applied, and the image of the middle horizontal sequence is given by 
\begin{equation}\label{eq-chern-character-integral}
	\left(\langle \widehat{A}(F)\operatorname{ch}(E^0_{1/i}-\mathbb{C}^{d_i}), [M]\rangle\right)_{i\in \mathbb{N}} = \left(\langle \widehat{A}(\widetilde{F}_{1/i})\operatorname{ch}(f^\ast_{1/i} E-\mathbb{C}^{d}), [\widetilde{M}_{1/i}]\rangle\right)_{i\in \mathbb{N}}
\end{equation}
which is a sequence that has infinitely many nonzero terms. This leads to Theorem~\ref{thm-main-result}.

\section{Construction and properties of $E^0_\varepsilon$}

This section is organized as follows: in subsection~\ref{sec-difference-bundles} the virtual bundle \eqref{eq-difference-k-bundle} is discussed,  in subsection~\ref{sec-kk-equivalence} the equivalent finite dimensional virtual bundle is defined, in subsection~\ref{sec-asymptotically-flat} the asymptotically flatness is proved  and finally in subsection~\ref{sec-chern-character}, the equation \eqref{eq-chern-character-integral} is proved.

\subsection{Difference of $\mathcal{K}$-bundles}\label{sec-difference-bundles}

Due to the fourth bullet point in the Definition~\ref{def-enlargeable-foliation}, the pullback $f_\varepsilon^\ast E\to \widetilde{M}_\varepsilon$ and the trivial bundle $\mathbb{C}^d\to \widetilde{M}_\varepsilon$ are isomorphic outside the compact subset $K_\varepsilon\subset \widetilde{M}_\varepsilon$. Since $\widetilde{M}_\varepsilon$ is locally compact, there is an open neighborhood $K_\varepsilon\subset K_\varepsilon^\prime$ whose closure is compact. Let $\theta_\varepsilon: f_\varepsilon^\ast E\to \mathbb{C}^d$ be the unitary outside $K^\prime_\varepsilon$ which, according to the Tietze extension theorem, admits an extension to $\widetilde{M}_\varepsilon$. We shall use the same notation to denote the extension.

\[
\xymatrix{
f_\varepsilon^\ast E \ar[r]^{\theta_\varepsilon}  \ar[d]& \mathbb{C}^d \ar[d]&&E_\varepsilon \ar[r]^{\theta^\mathcal{K}_\varepsilon}  \ar[d]& C^\varepsilon_{T} \ar[d]\\
\widetilde{M}_\varepsilon \ar[r]^{\operatorname{id}} &\widetilde{M}_\varepsilon&&M \ar[r]^{\operatorname{id}} &M
}
\]

\begin{proposition}\label{prop-theta-local}
There is a bundle map 
\[
\theta_\varepsilon^\mathcal{K}: E_\varepsilon\to C^\varepsilon_T
\]
such that the restriction $\theta_\varepsilon^\mathcal{K}|_{U_\alpha}$ is given by left multiplication of element in $C(U_\alpha, C_T^{\varepsilon,+})$ for all $\alpha$. Here we denote by $C^{\varepsilon,+}_T$ the one-point unitlization.
\end{proposition}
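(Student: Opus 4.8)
The plan is to construct $\theta^\mathcal{K}_\varepsilon$ by \emph{wrapping up} the bundle map $\theta_\varepsilon\colon f^\ast_\varepsilon E\to\mathbb{C}^d$ in exactly the same way that $E_\varepsilon$ was wrapped up from $f^\ast_\varepsilon E$ in Observation~\ref{observation}, and then reading off its local form. The first step is to fix the trivializations $\varphi_\alpha$ of \eqref{eq-trivializations-of-pullback} adapted to the constant region; since the $\varphi_\alpha$ were only fixed as \emph{some} admissible trivializations and any such choice yields an isomorphic $E_\varepsilon$, I am free to make a convenient one. By the fourth bullet of Definition~\ref{def-enlargeable-foliation}, $f_\varepsilon$ is constant on the complement of $K_\varepsilon$, so $f^\ast_\varepsilon E$ restricts there to the constant bundle with fibre $E_{p_0}\cong\mathbb{C}^d$, and $\theta_\varepsilon$ is the corresponding constant unitary $U_0$ outside $K'_\varepsilon$. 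I would choose each $\varphi_\alpha$ to agree with $U_0$ on $\pi_\varepsilon^{-1}(U_\alpha)\setminus K'_\varepsilon$; this is consistent across charts, so that the transition functions \eqref{eq-transition-function} reduce to the pure sheet-permutation part (an element of $C^\varepsilon_S$) outside $K'_\varepsilon$, as is needed for them to land in $C^\varepsilon_{S,T}$.

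Next I form the local representatives $\theta_{\varepsilon,\alpha}:=\theta_\varepsilon\circ\varphi_\alpha^{-1}\colon\pi_\varepsilon^{-1}(U_\alpha)\to M_d(\mathbb{C})$. Because $\theta_\varepsilon$ covers the identity of $\widetilde{M}_\varepsilon$, for fixed $x\in U_\alpha$ its restriction to the fibre $\pi_\varepsilon^{-1}(x)\cong G/H_\varepsilon$ is a \emph{diagonal} $G/H_\varepsilon$-family of $d\times d$ matrices. By the normalization above this family equals the identity on every sheet lying outside $K'_\varepsilon$, while only the finitely many sheets over $K'_\varepsilon\cap U_\alpha$ carry a nontrivial matrix. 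Hence the family has the form $\operatorname{id}+(\text{finite-support correction})$ and defines an element $m_\alpha(x)\in C^{\varepsilon,+}_T$, giving a map $m_\alpha\colon U_\alpha\to C^{\varepsilon,+}_T$; I then set $\theta^\mathcal{K}_\varepsilon|_{U_\alpha}$ to be left multiplication by $m_\alpha$, which is the asserted local form.

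It then remains to check that these local pieces glue to a global morphism of Hilbert $C^\varepsilon_T$-module bundles. Since $\theta_\varepsilon$ is a single globally defined bundle map, its representatives satisfy $\theta_{\varepsilon,\beta}=\theta_{\varepsilon,\alpha}\circ\varphi_{\alpha\beta}$ on overlaps; upon wrapping up this reads $m_\beta=m_\alpha\,\varphi_{\alpha\beta}$, which is precisely the compatibility required for left multiplication by $\{m_\alpha\}$ to descend from $E_\varepsilon$, built with transition functions $\varphi_{\alpha\beta}$ via the multiplier map \eqref{eq-CST-to-multiplier-algebra}, to the trivial bundle $M\times C^\varepsilon_T$. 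As each $m_\alpha(x)\in C^{\varepsilon,+}_T=M(C^\varepsilon_T)$ acts as an adjointable, right $C^\varepsilon_T$-linear operator, the resulting $\theta^\mathcal{K}_\varepsilon$ is a morphism of Hilbert $C^\varepsilon_T$-module bundles of the stated form.

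I expect the genuine work to lie in the first two steps rather than in the gluing. Two points need care: arranging the trivializations so that the wrapped-up $\theta_\varepsilon$ is really \emph{scalar identity plus compact} rather than merely unitary on cofinitely many sheets, which is what forces the appeal to the constancy of $f_\varepsilon$ outside a compact set and to the honest trivialization of $f^\ast_\varepsilon E$ over the constant region; and the norm-continuity of $x\mapsto m_\alpha(x)$ across $\partial K'_\varepsilon$, where sheets pass between the corrected and the identity regimes. The latter follows from continuity of $\theta_\varepsilon$ together with the fact that the correction tends to $0$ in norm as a sheet approaches the constant region, so that $m_\alpha\in C(U_\alpha,C^{\varepsilon,+}_T)$; once this is in place the compatibility of the third step is formal.
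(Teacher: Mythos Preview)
Your proposal is correct and follows essentially the same route as the paper: form $\theta_\varepsilon\circ\varphi_\alpha^{-1}$, read it as a diagonal $G/H_\varepsilon$-family of $d\times d$ matrices that is the identity on all but finitely many sheets and hence lies in $C^{\varepsilon,+}_T$, and define $\theta_\varepsilon^{\mathcal{K}}$ over $U_\alpha$ as left multiplication by this family. The paper's proof is terse and simply asserts ``all but finitely many are $1$'' and invariance under transition functions; your version makes explicit the normalization of the $\varphi_\alpha$ against the constant unitary outside $K'_\varepsilon$ that is needed for the former, and writes out the gluing relation $m_\beta=m_\alpha\varphi_{\alpha\beta}$ for the latter, but these are elaborations of the same argument rather than a different approach.
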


\begin{proof}
Under the trivializations \eqref{eq-trivializations-of-pullback}, the bundle map $\theta_\varepsilon: f^\ast_\varepsilon E \to \mathbb{C}^d$ is given by 
\[
\theta_\varepsilon|_{\pi^{-1}_\varepsilon(U_\alpha)}\circ \varphi^{-1}_\alpha,
\]
which can be taken as a $G/H_\varepsilon$ family of $M(\mathbb{C}^d)$ all but finitely many are $1$. In other word, it can be taken as a map from $U_\alpha$ to $C^{\varepsilon,+}_T$. Define $\theta_\varepsilon^\mathcal{K}$ to be the left multiplication with $\theta_\varepsilon|_{\pi^{-1}_\varepsilon(U_\alpha)}\circ \varphi^{-1}_\alpha$ over $U_\alpha$ for all $\alpha$. It is clear that this definition of $\theta_\varepsilon^\mathcal{K}$ is invariant under the transition functions of $E_\varepsilon$.
\end{proof}

\begin{proposition}
	Denote by $C(M, E_\varepsilon\oplus C^\varepsilon_{T})$ the graded Hilbert module over $C(M)\otimes C^\varepsilon_{T}$ whose even part is given by $E_\varepsilon$ and the odd part is given by the trivial bundle $C^\varepsilon_{T}$. Then the triple
\begin{equation}\label{eq-Kasparov-module-of-k-bundles}
	\left(C(M, E_\varepsilon\oplus C^\varepsilon_{T}),\begin{bmatrix}
		1&0\\
		0&1
	\end{bmatrix},
\begin{bmatrix}
	0 & \theta_\varepsilon^{\mathcal{K},\ast} \\
	\theta_\varepsilon^\mathcal{K} & 0
\end{bmatrix}
\right)
\end{equation}
is a Kasparov module in $KK(\mathbb{C}, C(M)\otimes C^\varepsilon_{T})$.
\end{proposition}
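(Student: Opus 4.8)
The plan is to verify the three defining conditions of a Kasparov $(\mathbb{C},B)$-module directly, with $B=C(M)\otimes C^\varepsilon_T$, for the graded Hilbert $B$-module $\mathcal{E}=C(M,E_\varepsilon\oplus C^\varepsilon_T)$ (even part $C(M,E_\varepsilon)$, odd part $C(M)\otimes C^\varepsilon_T$; this is countably generated, the fibre $C^\varepsilon_T\cong\mathcal{K}$ being separable), the action $\phi$ of $\mathbb{C}$ by the identity, and the odd operator $F=\bigl[\begin{smallmatrix}0&\theta_\varepsilon^{\mathcal{K},\ast}\\ \theta_\varepsilon^{\mathcal{K}}&0\end{smallmatrix}\bigr]$; here $\mathcal{K}(\cdot)$ and $\mathcal{B}(\cdot)$ denote the compact and the adjointable operators on a Hilbert module. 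Since $\mathbb{C}$ is unital and $\phi(1)=\mathrm{id}$, it suffices to check the conditions on the generator $1$. Two of them are formal: the graded commutator $[F,\phi(1)]$ vanishes because $\phi(1)$ is a central scalar, and $F-F^\ast=0$ because the two off-diagonal entries of $F$ are mutually adjoint by construction. For the latter I first note that $\theta_\varepsilon^{\mathcal{K}}$ is adjointable: by Proposition~\ref{prop-theta-local} it is, over each $U_\alpha$, left multiplication by $u_\alpha\in C(U_\alpha,C^{\varepsilon,+}_T)\subset M(C(U_\alpha)\otimes C^\varepsilon_T)$, and left multiplication by a multiplier is adjointable with adjoint $L_{u_\alpha^\ast}$. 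Thus $F\in\mathcal{B}(\mathcal{E})$ is self-adjoint, and the entire content of the statement reduces to showing $F^2-\mathrm{id}\in\mathcal{K}(\mathcal{E})$.

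To this end I would compute $F^2-\mathrm{id}=\mathrm{diag}\bigl(\theta_\varepsilon^{\mathcal{K},\ast}\theta_\varepsilon^{\mathcal{K}}-1,\ \theta_\varepsilon^{\mathcal{K}}\theta_\varepsilon^{\mathcal{K},\ast}-1\bigr)$ and examine each diagonal entry in the trivializations $U_\alpha$. The decisive input is the fourth bullet of Definition~\ref{def-enlargeable-foliation}: since $f_\varepsilon$ is constant outside $K_\varepsilon$, the bundle map $\theta_\varepsilon$ is a genuine unitary outside the compact set $K_\varepsilon'$, so over each point of $U_\alpha$ the associated $G/H_\varepsilon$-family of matrices equals the identity on all but finitely many sheets. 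Consequently the local symbol has the form $u_\alpha=1+k_\alpha$ with $k_\alpha\in C(U_\alpha,C^\varepsilon_T)$; that is, subtracting $1$ cancels the identity-at-infinity and leaves a remainder in the honest ideal $C^\varepsilon_T\cong\mathcal{K}$ rather than merely in its multiplier algebra. Expanding $u_\alpha^\ast u_\alpha-1$ and $u_\alpha u_\alpha^\ast-1$ then shows that both diagonal entries of $F^2-\mathrm{id}$ are, over each $U_\alpha$, left multiplication by a $C^\varepsilon_T$-valued function.

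It remains to upgrade this to the global assertion $F^2-\mathrm{id}\in\mathcal{K}(\mathcal{E})$. Here one must be careful: $C^\varepsilon_T$ is non-unital, so neither summand of $\mathcal{E}$ is finitely generated projective over $B$ and the shortcut $\mathcal{K}=\mathcal{B}$ is unavailable. Instead I would invoke the standard identification $\mathcal{K}(C(M,V))\cong C(M,\mathcal{K}(V))$ for a locally trivial Hilbert $C^\varepsilon_T$-module bundle $V$ over the compact base $M$ with fibre the rank-one free module $C^\varepsilon_T$, whose fibrewise compacts $\mathcal{K}\bigl((C^\varepsilon_T)_{C^\varepsilon_T}\bigr)\cong C^\varepsilon_T$ act precisely by left multiplication. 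Choosing a partition of unity $\{\chi_\alpha\}$ subordinate to $\{U_\alpha\}$ and writing $F^2-\mathrm{id}=\sum_\alpha\chi_\alpha(F^2-\mathrm{id})$, which is a finite sum because $M$ is compact, each term is left multiplication by a section of $C(M,C^\varepsilon_T)$ (extended by zero off $U_\alpha$), hence a continuous section of the compact-operator bundle and therefore an element of $\mathcal{K}(\mathcal{E})$; this applies uniformly to both diagonal entries, $V$ being $E_\varepsilon$ in the even slot and the trivial bundle $C^\varepsilon_T$ in the odd slot.

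The main obstacle is precisely the coupling of these two points. First, one genuinely needs the enlargeability hypothesis — constancy of $f_\varepsilon$ at infinity — to guarantee that the diagonal of $F^2-\mathrm{id}$ lands in the ideal $C^\varepsilon_T$ and not just in $M(C^\varepsilon_T)$; without this cancellation the operator would only be a multiplier and would fail to be compact. Second, because the coefficient algebra is non-unital, compactness cannot be read off formally and must be established through the left action of the ideal $\mathcal{K}\cong C^\varepsilon_T$ together with the partition-of-unity patching above. The remaining verifications — adjointability, self-adjointness, and the vanishing of the commutator — are routine.
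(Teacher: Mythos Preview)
Your proposal is correct and follows essentially the same approach as the paper: reduce to showing $F^2-1$ is compact, then use Proposition~\ref{prop-theta-local} to see that the diagonal entries $\theta_\varepsilon^{\mathcal{K}}\theta_\varepsilon^{\mathcal{K},\ast}-1$ and $\theta_\varepsilon^{\mathcal{K},\ast}\theta_\varepsilon^{\mathcal{K}}-1$ are left multiplication by sections of $C^\varepsilon_T$ and of $\mathcal{K}(E_\varepsilon)$ respectively. The paper's proof is a two-line invocation of Proposition~\ref{prop-theta-local}; your version is a careful unpacking of the same idea, making explicit the cancellation $u_\alpha=1+k_\alpha$ with $k_\alpha\in C(U_\alpha,C^\varepsilon_T)$, the identification $\mathcal{K}((C^\varepsilon_T)_{C^\varepsilon_T})\cong C^\varepsilon_T$, and the partition-of-unity patching needed because $C^\varepsilon_T$ is non-unital.
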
 

\begin{proof}
	 It suffices to check that $\begin{bmatrix}
	 	0 & \theta_\varepsilon^{\mathcal{K},\ast} \\
	 	\theta_\varepsilon^\mathcal{K} & 0
	 \end{bmatrix}^2-1$ is a compact operator. Indeed, according to Proposition~\ref{prop-theta-local}, $\theta^\mathcal{K}_\varepsilon \theta^{\mathcal{K},\ast}_\varepsilon-1$ and $\theta^{\mathcal{K},\ast}_\varepsilon \theta^\mathcal{K}_\varepsilon-1$ are given by left multiplication with elements in $C^\infty(M, C^\varepsilon_T)$ and $C(M,\mathcal{K}(E_\varepsilon))$ respectively.
\end{proof}

\subsection{KK-equivalence}\label{sec-kk-equivalence}

Let $\mathcal{H}$ be the standard separable Hilbert space. Analogous to the observation~\ref{observation}, the same set of transition functions \eqref{eq-transition-function} together with the trivializations $U_\alpha\times \mathcal{H}$ build a bundle of Hilbert spaces $\mathcal{H}_\varepsilon$, and parallel to Proposition~\ref{prop-theta-local}, there is a bundle map $\theta_\varepsilon^\mathcal{H}: \mathcal{H}_\varepsilon\to \mathcal{H}\times M$ such that $\theta_\varepsilon^{\mathcal{H}}\theta_\varepsilon^{\mathcal{H},\ast}-1$ and $\theta_\varepsilon^{\mathcal{H},\ast}\theta_\varepsilon^{\mathcal{H}}-1$ are given by compact operators.

The $KK$-equivalence between $\mathcal{K}$ and $\mathbb{C}$ is implemented by the elements $x=(\mathcal{H}, 1, 0) \in KK(\mathcal{K}, \mathbb{C})$ and $y=(\mathcal{K},p_1,0 )\in KK(\mathbb{C}, \mathcal{K})$ where $p_1\in \mathcal{K}$ is some rank one projection. Under the Kasparov product $KK(\mathbb{C}, C(M)\otimes \mathcal{K})\otimes KK(\mathcal{K}, \mathbb{C})\to K_0(C(M))$ the Kasparov module \eqref{eq-Kasparov-module-of-k-bundles} becomes
\begin{equation}\label{eq-Kasparov-module-of-bundles}
	\left(C(M, \mathcal{H}_\varepsilon\oplus \mathcal{H}), \begin{bmatrix}
		1&0\\
		0&1
	\end{bmatrix}, 
\begin{bmatrix}
	0 & \theta_\varepsilon^{\mathcal{H},\ast}\\
	\theta_\varepsilon^{\mathcal{H}} & 0
\end{bmatrix}
\right).
\end{equation}
We recall a trick used in \cite{AtiyahSinger71IndexIV}, to find an equivalent finite dimensional virtual bundle to \eqref{eq-Kasparov-module-of-bundles}.

\begin{proposition}\label{prop-build-vector-bundle-out-of-k-bundle}
	There is a finite set of sections $\{s_1,s_2,\cdots,s_q\}$ of $\mathcal{H}\times M\to M$ such that the map $\overline{\theta}_\varepsilon: C(M, \mathcal{H}_\varepsilon\oplus \mathbb{C}^q)\to C(M, \mathcal{H})$ given by
	\[
	(u,\lambda_1,\lambda_2,\cdots, \lambda_q)\mapsto  \theta^\mathcal{H}_\varepsilon(u)+\sum_{i=1}^q \lambda_is_i
	\]
	is surjective and whose kernel is a sub-bundle of $\mathcal{H}_\varepsilon\oplus \mathbb{C}^q$. Moreover the $K$-theory element \eqref{eq-Kasparov-module-of-bundles} is equivalent to the virtual bundle $[\operatorname{Ker}(\overline{\theta}_\varepsilon)]-[M\times \mathbb{C}^q]$ in $K_0(C(M))$.
\end{proposition}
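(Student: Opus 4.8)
The plan is to treat $\theta^{\mathcal{H}}_\varepsilon$ as a continuous family of Fredholm operators over the compact manifold $M$ and to run the Atiyah--Singer stabilization argument. First I would record that, because $\theta^{\mathcal{H}}_\varepsilon\theta^{\mathcal{H},\ast}_\varepsilon - 1$ and $\theta^{\mathcal{H},\ast}_\varepsilon\theta^{\mathcal{H}}_\varepsilon - 1$ are fiberwise compact, the operator $\theta^{\mathcal{H}}_{\varepsilon,x}\colon \mathcal{H}_{\varepsilon,x}\to\mathcal{H}$ is Fredholm for every $x\in M$; in particular it has closed range with finite-dimensional cokernel, and the class of the Kasparov module \eqref{eq-Kasparov-module-of-bundles} is exactly the family index $\operatorname{Ind}(\theta^{\mathcal{H}}_\varepsilon)\in K_0(C(M))$.

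Next I would construct the sections. Fix $x_0\in M$ and choose vectors $v_1,\dots,v_k\in\mathcal{H}$ whose images span a complement of $\operatorname{im}\theta^{\mathcal{H}}_{\varepsilon,x_0}$; regarding them as constant sections of $\mathcal{H}\times M$, the augmented map $(u,\lambda)\mapsto\theta^{\mathcal{H}}_\varepsilon(u)+\sum_i\lambda_i v_i$ is surjective at $x_0$. After trivializing $\mathcal{H}_\varepsilon$ over a chart this becomes a norm-continuous family of operators, and since the surjective bounded operators form an open set, surjectivity persists on an open neighborhood of $x_0$. Compactness of $M$ then lets me pass to a finite subcover and collect the corresponding vectors into a single finite list $s_1,\dots,s_q$, after which $\overline{\theta}_\varepsilon$ is fiberwise surjective on all of $M$.

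Then I would show $\operatorname{Ker}(\overline{\theta}_\varepsilon)$ is a subbundle. Because $\overline{\theta}_{\varepsilon,x}$ is surjective, $\overline{\theta}_{\varepsilon,x}\overline{\theta}_{\varepsilon,x}^{\ast}$ is a positive invertible operator on $\mathcal{H}$, so $P_x = 1 - \overline{\theta}_{\varepsilon,x}^{\ast}(\overline{\theta}_{\varepsilon,x}\overline{\theta}_{\varepsilon,x}^{\ast})^{-1}\overline{\theta}_{\varepsilon,x}$ is the orthogonal projection onto $\operatorname{Ker}(\overline{\theta}_{\varepsilon,x})$ and depends continuously on $x$. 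This exhibits the kernel as a complemented continuous subbundle of $\mathcal{H}_\varepsilon\oplus\mathbb{C}^q$, of finite rank equal to $\operatorname{ind}(\theta^{\mathcal{H}}_\varepsilon)+q$.

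Finally I would identify the $K$-theory class. Replacing the sections $s_i$ by $t\,s_i$, $t\in[0,1]$, gives an operator homotopy from $\overline{\theta}_\varepsilon$ to $\theta^{\mathcal{H}}_\varepsilon\oplus 0$ on $\mathcal{H}_\varepsilon\oplus\mathbb{C}^q$; since each $\sum_i\lambda_i(t\,s_i)$ is finite rank, the Kasparov condition $F^2-1\in\mathcal{K}$ is preserved along the homotopy, so the two modules define the same class. The surjective model contributes $[\operatorname{Ker}(\overline{\theta}_\varepsilon)]$, while the split model $\theta^{\mathcal{H}}_\varepsilon\oplus 0$ has kernel $\operatorname{Ker}(\theta^{\mathcal{H}}_\varepsilon)\oplus\mathbb{C}^q$ and the same cokernel as $\theta^{\mathcal{H}}_\varepsilon$, hence contributes $\operatorname{Ind}(\theta^{\mathcal{H}}_\varepsilon)+[M\times\mathbb{C}^q]$; equating the two yields $\operatorname{Ind}(\theta^{\mathcal{H}}_\varepsilon)=[\operatorname{Ker}(\overline{\theta}_\varepsilon)]-[M\times\mathbb{C}^q]$, which is the assertion. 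I expect the main obstacle to be the global construction of the $s_i$ together with the verification that the kernel is genuinely a subbundle; the homotopy-invariance bookkeeping at the end is routine once the Fredholm-family picture is set up.
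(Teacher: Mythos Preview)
Your proposal is correct and follows essentially the same Atiyah--Singer stabilization argument as the paper: pick finite-dimensional complements to the image at each point, pass to a finite cover by compactness, and stabilize $\theta^{\mathcal{H}}_\varepsilon$ to a surjective bundle map whose kernel is the desired finite-rank bundle. The only noteworthy difference is in the final $K$-theory identification: you appeal to the family-index interpretation and a homotopy $t\mapsto t\,s_i$, whereas the paper adds the degenerate module $(\mathbb{C}^q\oplus\mathbb{C}^q,\mathrm{id})$, passes to $\overline{\theta}_\varepsilon$ by a compact perturbation, and then uses the polar decomposition $\overline{\theta}_\varepsilon=U_\varepsilon(\overline{\theta}_\varepsilon^\ast\overline{\theta}_\varepsilon)^{1/2}$ explicitly to show that the piece on $\ker(\overline{\theta}_\varepsilon)^\perp\oplus C(M,\mathcal{H})$ is degenerate; your sentence ``the surjective model contributes $[\ker(\overline{\theta}_\varepsilon)]$'' is exactly this polar-decomposition step, which you leave implicit.
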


\begin{proof}
	For any $m_0\in M$, there is an open neighborhood $U_{m_0}$ where the bundle $\mathcal{H}_\varepsilon$ is trivial. Then the restriction $\theta^{\mathcal{H}}_\varepsilon|_{U_{m_0}}$ can be identified with a map
	\[
	U_{m_0} \xrightarrow{\theta^\mathcal{H}_\varepsilon} \mathcal{F}(\mathcal{H}),
	\]
	here $\mathcal{F}(\mathcal{H})$ denotes the set of Fredholm operators on Hilbert space $\mathcal{H}$. Let $V_0=\operatorname{ker}(\theta_\varepsilon^{\mathcal{H},\ast}(m_0))$, and define $T^0_m: \mathcal{H}_\varepsilon\oplus V_0 \to \mathcal{H}$ to be 
	\[
	T^0_m(u\oplus v) = \theta^\mathcal{H}_\varepsilon u(m)+v.
	\]
	It is surjective at $m=m_0$ and therefore surjective in an open neighborhood $W_0$ of $m_0$. Since $M$ is compact, it can be covered by finitely many such open sets $W_i$ where the maps 
	\[
	T^i_m: \mathcal{H}_\varepsilon\oplus V_i \to \mathcal{H}
	\]
	are surjective. Let $\rho_i$ be the partition of unity associated to the cover $\{W_i\}$. Define
	\[
	\overline{\theta}_\varepsilon: C(M, \mathcal{H}_\varepsilon\bigoplus \oplus_i V_i) \to C(M,\mathcal{H})
	\]
	to be 
	\[
	\overline{\theta}_\varepsilon(u,v_i)(m) = \sum_i \rho_i(m) T^i_m(u,v_i),
	\]
	which is clearly surjective. 
	
	Over the open subset $U_{m_0}$, $\overline{\theta}_\varepsilon$ can be identified with
	$
	U_{m_0}\xrightarrow{\overline{\theta}_\varepsilon} \mathcal{B}(\mathcal{H}\oplus \mathbb{C}^q, \mathcal{H}).
	$
	Its composition with $\mathcal{B}(\mathcal{H}\oplus \mathbb{C}^q, \mathcal{H}) \to \mathcal{B}(\ker(\overline{\theta}_\varepsilon(m_0))^\perp, \mathcal{H})$ is invertible at $m_0$ and therefore invertible on an open neighborhood of $m_0$ where the kernel of $\overline{\theta}_\varepsilon$ is trivial. 
	
	Now, we shall verify the equality
	\[
	\text{The Kasparov module }\eqref{eq-Kasparov-module-of-bundles} = \left(\operatorname{ker}(\overline{\theta}_\varepsilon)\oplus C(M,\mathbb{C}^q), 1, 0\right)
	\]
	in $KK(\mathbb{C}, C(M))$. Indeed, by adding degenerated Kasparov module, we have
	\begin{align*}
	\text{The Kasparov module }\eqref{eq-Kasparov-module-of-bundles} &= \left(C(M, \mathcal{H}_\varepsilon\oplus \mathbb{C}^q\oplus \mathcal{H}\oplus \mathbb{C}^q),1, 
	\begin{bmatrix}
		0 & \theta_\varepsilon^{\mathcal{H},\ast} \\
		\theta_\varepsilon^\mathcal{H} & 0
	\end{bmatrix}
	\right)\\
	&=\left(C(M, \mathcal{H}_\varepsilon\oplus \mathbb{C}^q\oplus \mathcal{H}\oplus \mathbb{C}^q),1, 
	\begin{bmatrix}
		0 & \overline{\theta}_\varepsilon^{\ast} \\
		\overline{\theta}_\varepsilon & 0
	\end{bmatrix}
	\right),
	\end{align*}
here from the first line to the second line is a compact perturbation. Decomposing according to $\overline{\theta}_\varepsilon$, the above equation continues
\begin{align*}
=&\left( \operatorname{ker}(\overline{\theta}_\varepsilon)\oplus \operatorname{ker}(\overline{\theta}_\varepsilon)^\perp\oplus C(M, \mathcal{H})\oplus C(M, \mathbb{C}^q), 1, 
\begin{bmatrix}
	0 & \overline{\theta}_\varepsilon^{\ast} \\
	\overline{\theta}_\varepsilon & 0
\end{bmatrix}
\right)\\
=&\left(\operatorname{ker}(\overline{\theta}_\varepsilon) \oplus C(M, \mathbb{C}^q),1,0\right) \bigoplus
\left(\operatorname{ker}(\overline{\theta}_\varepsilon)^\perp\oplus C(M,\mathcal{H}),1,
\begin{bmatrix}
	0 & \overline{\theta}_\varepsilon^{\ast} \\
	\overline{\theta}_\varepsilon & 0
\end{bmatrix}
\right).
\end{align*}
It suffices to show that the second summand is a degenerate Kasparov module. Indeed, let $U_\varepsilon=\overline{\theta}_\varepsilon(\overline{\theta}_\varepsilon^\ast\overline{\theta}_\varepsilon)^{-1/2}$ be the unitary, and according to the Polar decomposition
\[
\overline{\theta}_\varepsilon = U_\varepsilon(\overline{\theta}_\varepsilon^\ast \overline{\theta}_\varepsilon)^{1/2},
\]
and the fact that $(\overline{\theta}_\varepsilon^\ast \overline{\theta}_\varepsilon)^{1/2}-1$ take value in compact operators,
the operator $\begin{bmatrix}
	0 & \overline{\theta}_\varepsilon^{\ast} \\
	\overline{\theta}_\varepsilon & 0
\end{bmatrix}$ is a compact perturbation of $\begin{bmatrix}
0 & U_\varepsilon^\ast \\
U_\varepsilon & 0
\end{bmatrix}$. This completes the proof.
\end{proof}

\begin{definition}
	Let $E^0_\varepsilon$ be the finite dimensional vector bundle $\ker(\overline{\theta}_\varepsilon)$.
\end{definition}

\subsection{Curvature of $E_\varepsilon^0$}\label{sec-asymptotically-flat}

The Hilbert bundle $\mathcal{H}_\varepsilon$ has connection induced from that of $f^\ast_\varepsilon E$ in the following way: the trivializations \eqref{eq-trivializations-of-pullback} can be viewed as $G/H_\varepsilon$-families of $U_\alpha\times \mathbb{C}^d$. The connection has local form $d+\omega$ on each connected component. So, these connection 1-forms $\omega$'s can be assembled into a single 1-form with value in $G/H_\varepsilon$-families of $M_d(\mathbb{C})$. This can be used as a connection $\nabla^{\mathcal{H}_\varepsilon}$ on $\mathcal{H}_\varepsilon$ whose curvature converges to zero as $\varepsilon\to 0$.

Notice that $U_\varepsilon$ extends to a map from $C(M,\mathcal{H}_\varepsilon\oplus \mathbb{C}^q)$ to $C(M,\mathcal{H})$ whose kernel is $\ker(\overline{\theta}_\varepsilon)$. Let  $U^{-1}_\varepsilon$ be the inverse map from $C(M,\mathcal{H})\to C(M,\mathcal{H}_\varepsilon\oplus\mathbb{C}^q)$ whose range is $\ker(\overline{\theta}_\varepsilon)^\perp$. Then $U_\varepsilon U_\varepsilon^{-1}=\operatorname{id}_{C(M,\mathcal{H})}$. Let $s$ be a section of $\ker(U_\varepsilon)$, then $U_\varepsilon(s)=0$ and
\[
0=\nabla^{\mathcal{H}}(U_\varepsilon(s)) = \nabla^{\mathcal{B}(\mathcal{H}_\varepsilon\oplus \mathbb{C}^q,\mathcal{H})} (U_\varepsilon)(s)+U_\varepsilon(\nabla^{\mathcal{H}_\varepsilon\oplus\mathbb{C}^q}s).
\]
Therefore, the subbundle $E^0_\varepsilon=\ker(\overline{\theta}_\varepsilon)=\ker(U_\varepsilon)$ can be equipped with the following connection
\begin{equation}\label{eq-connection-of-the-finite-dim-bundle}
s\mapsto \nabla^{\mathcal{H}_\varepsilon\oplus\mathbb{C}^q} s+ U_\varepsilon^{-1} (\nabla^{\mathcal{B}(\mathcal{H}_\varepsilon\oplus \mathbb{C}^q,\mathcal{H})} U_\varepsilon) s. 
\end{equation}
It has curvature 
\[
 \nabla^{\mathcal{H}_\varepsilon\oplus\mathbb{C}^q,2}+U_\varepsilon^{-1} (\nabla^{\mathcal{B}(\mathcal{H}_\varepsilon\oplus \mathbb{C}^q,\mathcal{H}),2} U_\varepsilon),
\]
which clearly converges to zero as $\varepsilon\to 0$.

\subsection{Chern Character of $E_\varepsilon^0$}\label{sec-chern-character}

The natural connection on $\mathcal{H}_\varepsilon$ induced from that of $E\to S^n$ have curvature $\Omega^{\mathcal{H}_\varepsilon}$ with value in $G/H_\varepsilon$-families of $M_d(\mathbb{C})$ all but finitely many are zero. Therefore, $\exp(-\Omega^{\mathcal{H}_\varepsilon})$-1 is of trace class. Define the Chern form $\operatorname{ch}(\nabla^{\mathcal{H}_\varepsilon}, \nabla^\mathcal{H})$ to be $\operatorname{tr}(\exp(-\Omega^{\mathcal{H}_\varepsilon})-\exp(-\Omega^\mathcal{H}))$. 

\begin{proposition}
	The cohomology class determined by $\operatorname{ch}(\nabla^{\mathcal{H}_\varepsilon}, \nabla^\mathcal{H})$ is the same as that of $\operatorname{ch}(E^0_\varepsilon-\mathbb{C}^q)$.
\end{proposition}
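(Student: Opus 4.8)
The plan is to transport the whole computation onto the bundle $W := \mathcal{H}_\varepsilon \oplus \mathbb{C}^q$ equipped with the connection $\nabla_0 := \nabla^{\mathcal{H}_\varepsilon} \oplus d$, and to exploit the orthogonal splitting $W = E^0_\varepsilon \oplus (E^0_\varepsilon)^\perp$ furnished by Proposition~\ref{prop-build-vector-bundle-out-of-k-bundle}. Since $\mathcal{H} = \mathcal{H}\times M$ is the trivial bundle with $\nabla^{\mathcal{H}} = d$ and hence $\Omega^{\mathcal{H}} = 0$, the relative Chern form is simply $\operatorname{ch}(\nabla^{\mathcal{H}_\varepsilon}, \nabla^{\mathcal{H}}) = \operatorname{tr}(\exp(-\Omega^{\mathcal{H}_\varepsilon}) - 1)$; and because the flat summand $\mathbb{C}^q$ contributes nothing to $\operatorname{tr}(\exp(-\Omega_0) - 1)$, one has $\operatorname{tr}(\exp(-\Omega_0) - 1) = \operatorname{ch}(\nabla^{\mathcal{H}_\varepsilon}, \nabla^{\mathcal{H}})$ at the level of forms. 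It therefore suffices to compare $\operatorname{tr}(\exp(-\Omega_0) - 1)$ with $\operatorname{ch}(E^0_\varepsilon) - q$ in de Rham cohomology.

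First I would replace $\nabla_0$ by its block-diagonal compression $\nabla^{\mathrm{bd}} = \nabla^{E^0_\varepsilon} \oplus \nabla^\perp$ with respect to $W = E^0_\varepsilon \oplus (E^0_\varepsilon)^\perp$, where $P$ denotes the orthogonal projection onto the finite rank subbundle $E^0_\varepsilon$, and $\nabla^{E^0_\varepsilon} = P\nabla_0 P$, $\nabla^\perp = (1-P)\nabla_0(1-P)$. A direct computation identifies $P\nabla_0 P$ with the connection \eqref{eq-connection-of-the-finite-dim-bundle}. The off-diagonal blocks $P\nabla_0(1-P)$ and $(1-P)\nabla_0 P$ are the second fundamental forms; each factors through the finite rank projection $P$ and is therefore a trace class operator valued one-form. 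Consequently the Chern--Simons transgression interpolating between $\nabla_0$ and $\nabla^{\mathrm{bd}}$ has a trace class integrand, so $[\operatorname{tr}(\exp(-\Omega_0) - 1)] = [\operatorname{tr}(\exp(-\Omega^{\mathrm{bd}}) - 1)]$, and the latter splits as $[\operatorname{ch}(E^0_\varepsilon) - \operatorname{rank}(E^0_\varepsilon)] + [\operatorname{tr}(\exp(-\Omega^\perp) - 1)]$.

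Two points remain. The first is the bookkeeping identity $\operatorname{rank}(E^0_\varepsilon) = q$: fibrewise $\overline\theta_\varepsilon$ is a surjection with kernel $E^0_\varepsilon$, so its index equals $\operatorname{rank}(E^0_\varepsilon)$; on the other hand $\overline\theta_\varepsilon = \theta^{\mathcal{H}}_\varepsilon \oplus (\text{the }s_i\text{-block})$, and $\theta^{\mathcal{H}}_\varepsilon$ acts as a $G/H_\varepsilon$-family of square matrices (all but finitely many the identity), hence has index $0$, leaving the index of $\overline\theta_\varepsilon$ equal to $q$. This matches the degree-zero parts and reduces the proposition to showing that the complementary term $[\operatorname{tr}(\exp(-\Omega^\perp) - 1)]$ vanishes. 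For this I would use the isomorphism $\overline\theta_\varepsilon \colon (E^0_\varepsilon)^\perp \xrightarrow{\sim} \mathcal{H}$ to pull back the flat connection $d$ on $\mathcal{H}\times M$ to a flat connection $\nabla^U$ on $(E^0_\varepsilon)^\perp$, and transgress $\nabla^\perp$ to $\nabla^U$; since $\nabla^U$ is flat, this exhibits $\operatorname{tr}(\exp(-\Omega^\perp) - 1)$ as an exact form.

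The main obstacle is the trace class bookkeeping on the infinite dimensional fibres, specifically justifying that the transgression between $\nabla^\perp$ and $\nabla^U$ has a trace class integrand, even though a connection one-form on $\mathcal{H}_\varepsilon$ is a priori not of trace class. The resolution rests on the fourth bullet of Definition~\ref{def-enlargeable-foliation}: because $f_\varepsilon$ is constant outside the compact set $K_\varepsilon$, that set meets only finitely many sheets of each fibre $G/H_\varepsilon$, so the pulled back connection is flat on all but finitely many sheets. Choosing the trivializations \eqref{eq-trivializations-of-pullback} to be the flat ones there makes the connection one-form of $\nabla^{\mathcal{H}_\varepsilon}$, and with it the defect of $\theta^{\mathcal{H}}_\varepsilon$ from intertwining $\nabla^{\mathcal{H}_\varepsilon}$ and $d$, a finite rank object. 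Propagating this through the polar part $U_\varepsilon = \overline\theta_\varepsilon(\overline\theta_\varepsilon^\ast\overline\theta_\varepsilon)^{-1/2}$, whose difference from a genuine isometry is already compact by the proof of Proposition~\ref{prop-build-vector-bundle-out-of-k-bundle}, shows that $\nabla^\perp - \nabla^U$ is of trace class, which is exactly what makes both transgressions legitimate and completes the identification of cohomology classes.
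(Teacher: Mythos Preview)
Your proof is correct and follows essentially the same transgression strategy as the paper. The only difference is organizational: the paper transgresses in a single step from $\nabla^{\mathcal{H}_\varepsilon\oplus\mathbb{C}^q}$ directly to $\nabla^{E^0_\varepsilon}\oplus\nabla^{E^{0,\perp}_\varepsilon}$, where $\nabla^{E^{0,\perp}_\varepsilon}$ is already the flat connection pulled back via $U_\varepsilon$ (your $\nabla^U$), thereby bypassing your intermediate stop at the compressed connection $\nabla^\perp$ and the separate second transgression; the finite-rank bookkeeping is correspondingly shorter, and your explicit index computation $\operatorname{rank}(E^0_\varepsilon)=q$ is a detail the paper leaves implicit.
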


\begin{proof}
	This follows from the standard transgression argument.
	Let $\nabla^{E^0_\varepsilon}$ be the connection on $E^0_\varepsilon$ defined by the equation \eqref{eq-connection-of-the-finite-dim-bundle}, $\nabla^{E^{0,\perp}_\varepsilon}$ be the trivial connection on $E^{0,\perp}_\varepsilon$ that is pulled back from that of $\mathcal{H}$ by $U_\varepsilon$ and $\nabla^{E^0_\varepsilon\oplus E^{0,\perp}_\varepsilon}$ be the direct sum. 
	
	Let $A_t, 0\leq t\leq 1$ be a family of connections on $\mathcal{H}_\varepsilon\oplus \mathbb{C}^q$ that is defined by $A_t=t\nabla^{\mathcal{H}_\varepsilon\oplus \mathbb{C}^q}+(1-t)\nabla^{E^0_\varepsilon\oplus E^{0,\perp}_\varepsilon}, 0\leq t\leq 1$. Notice that the connection 1-forms of $\nabla^{\mathcal{H}_\varepsilon\oplus \mathbb{C}^q}$, $\nabla^{E^0_\varepsilon\oplus E^{0,\perp}_\varepsilon}$ and the curvatures of $A_t$ are finite rank operators, therefore $\exp(-A_t^2)-1$ are of trace class. Then 
	\begin{align*}
	\frac{d}{dt}\operatorname{tr}\left(\exp(-A_t^2)-\exp(-\Omega^\mathcal{H})\right)&= \operatorname{tr}\left(-\frac{d A_t^2}{dt} \exp(-A_t^2)\right)\\
	&=\operatorname{tr}\left(-[A_t,\frac{d A_t}{dt}\exp(-A_t^2)]\right)\\
	&=d\operatorname{tr}\left(-\frac{d A_t}{dt}\exp(-A_t^2)\right).
	\end{align*}
Here $d A_t/dt$ is a 1-form $\nabla^{\mathcal{H}_\varepsilon\oplus \mathbb{C}^q}-\nabla^{E^0_\varepsilon\oplus E^{0,\perp}_\varepsilon}$ with value in finite rank operators, so the expressions on the left hand side of the equations are of finite rank and, in particular, trace class. As a consequence, 
\[
\operatorname{ch}(\nabla^{\mathcal{H}_\varepsilon}, \nabla^\mathcal{H})-\operatorname{tr}\left(\exp(-\left(\nabla^{E^0_\varepsilon\oplus E^{0,\perp}_\varepsilon}\right)^2)-\exp(-\Omega^\mathcal{H})\right) = d\int^1_0 \operatorname{tr}\left(-\frac{d A_t}{dt}\exp(-A_t^2)\right).
\]
Therefore, the Chern form
$
\operatorname{ch}(\nabla^{\mathcal{H}_\varepsilon}, \nabla^\mathcal{H})
$
determines the same class as \[\operatorname{tr}\left(\exp(-\left(\nabla^{E^0_\varepsilon\oplus E^{0,\perp}_\varepsilon}\right)^2)-\exp(-\Omega^\mathcal{H})\right),\] which is easily computed to equal to $\operatorname{ch}(E^0_\varepsilon-\mathbb{C}^q)$.
\end{proof}

As a consequence of the above proposition,
\[
\langle \widehat{A}(F)\operatorname{ch}(E^0_\varepsilon-\mathbb{C}^q),[M] \rangle = \langle \widehat{A}(F)\operatorname{tr}(\exp(- \Omega^{\mathcal{H}_\varepsilon})-\exp(-\Omega^\mathcal{H})),[M] \rangle.
\]
Over each local trivialization $U_\alpha$, the above integral is equal to 
\[
\int_{U_\alpha} \widehat{A}(F) \operatorname{tr}(\exp(- \Omega^{\mathcal{H}_\varepsilon})-\exp(-\Omega^\mathcal{H})) = \int_{\pi^{-1}_\varepsilon(U_\alpha)} \widehat{A}(\widetilde{F}_\varepsilon) \operatorname{tr}(\exp(- \Omega^{f^\ast_\varepsilon E}) - \exp(-\Omega^\mathcal{H})).
\]
A partition of unity argument proves the equation \eqref{eq-chern-character-integral}.

\bibliography{Refs} 
\bibliographystyle{alpha}

\noindent {\small   Chern Institute of Mathematics, Nankai University, Tianjin 300071, P. R. China. }

\smallskip

\noindent{\small Email: guangxiangsu@nankai.edu.cn}

\medskip

\noindent{\small School of Mathematical Sciences, Tongji University, Shanghai 200092, P. R. China.}

\smallskip

\noindent{\small Email: zelin@tongji.edu.cn}


\end{document}